\newtheorem{theorem}{Theorem}[section]
\newtheorem{proposition}[theorem]{Proposition}
\theoremstyle{definition}
\newcommand{\changed}[1]{\textcolor{black}{#1}}
\newcommand{\julio}[1]{\ifthenelse{\boolean{showcomments}}
        { \textcolor{red}{(JB:  #1)}}{}}
\newcommand{\brian}[1]{\ifthenelse{\boolean{showcomments}}
        { \textcolor{blue}{(BL:  #1)}}{}}
\def\BibTeX{{\rm B\kern-.05em{\sc i\kern-.025em b}\kern-.08em
    T\kern-.1667em\lower.7ex\hbox{E}\kern-.125emX}}
\begin{document}

\title{\LARGE Linear OPF-based Robust Dynamic Operating Envelopes with Uncertainties in Unbalanced Distribution Networks}
\author{Bin Liu,~\IEEEmembership{Member,~IEEE},~Julio H. Braslavsky,~\IEEEmembership{Senior Member,~IEEE}~and~Nariman Mahdavi
\thanks{Manuscript received: May 10, xxxx; revised: May 20, xxxx; accepted: May 30, xxxx. Date of CrossCheck: May 30, xxxx.}
\thanks{This work is supported by the CSIRO Strategic Project on Network Optimisation \& Decarbonisation under Grant Number: OD-107890.}
\thanks{Bin Liu (\textit{corresponding author}) was with Energy Centre, Commonwealth Scientific and Industrial Research Organisation (CSIRO), Newcastle 2300, Australia. He is now with the Network Planning Division, Transgrid, Sydney 2000, Australia. (eeliubin@ieee.org)}
\thanks{Julio, H. Braslavsky and Nariman Mahdavi are with the Energy Systems Program, Energy Centre, Commonwealth Scientific and Industrial Research Organisation (CSIRO), Newcastle 2300, Australia. }
\thanks{DOI: 10.35833/MPCE.2023.000653}
}

\maketitle
\begin{abstract}
	Dynamic operating envelopes (DOEs), as a key enabler to facilitate DER integration, have attracted increasing attention in the past years. However, uncertainties, which may come from load forecast errors or inaccurate network parameters, have been rarely discussed in DOE calculation, leading to compromised quality of the hosting capacity allocation strategy. This letter studies how to calculate DOEs that are immune to such uncertainties based on a linearised unbalanced three-phase optimal power flow (UTOPF) model. With uncertain parameters constrained by norm balls, formulations for calculating Robust DOEs (RDOEs) are presented along with discussions on their tractability. Two cases, including a 2-bus illustrative network and a representative Australian network, are tested to demonstrate the effectiveness and efficiency of the proposed approach.
\end{abstract}

\begin{IEEEkeywords}
	Distributed energy resources (DERs), dynamic operational envelopes (DOEs), feasible region, robust optimisation, uncertainty modelling, unbalanced optimal power flow
\end{IEEEkeywords}

\section{Introduction}
\IEEEPARstart{T}{he} penetration of distributed energy resources (DERs) has been rapidly increasing worldwide in the past years, leading to a series of issues that require close coordination among transmission system operators (TSOs), distribution system operators (DSOs) and emerging DER aggregators via virtual power plants (VPPs) \cite{DEIP2022,EnergyNetworksAustralia2020}.
Dynamic operating envelope (DOE), which specifies the operational range for customers with DERs at the connection point that is permissible within the network operational limits, is identified as a key enabler in  future power system architectures and has gained increasing interest from both industry and academia to manage DER export/import limits and to facilitate DER participation in electricity markets \cite{EnergyNetworksAustralia2020}. \changed{Differing from static operating envelopes (SOEs) that are calculated based on worst operational scenarios that occur rarely in a distribution network, DOEs can be updated more frequently (day ahead, every several hours, hourly or every 15 minutes) to avoid unnecessary limitations on DER integration and free up the latent network hosting capacity. Moreover, compared with the co-optimisation of scheduled generators, DERs and network operations, DOEs can be calculated and published by an individual distribution system operator (DSO) to avoid traceability issues that may arise when both transmission and medium/low-voltage distribution networks are modelled and optimised together by a single central system operator.}

Although substantial advances have been made in developing approaches to calculating DOEs in recent years \cite{DEIP2022,Petrou2021,BL_ieee_access}, uncertainties, which may arise from load forecasting and inaccurate network parameters, are typically ignored in the calculations, which may lead to unreliable DOE allocations.
To address this issue, this letter proposes a method to calculate robust DOEs (RDOEs) that are immune to such uncertainties.
It is noteworthy that DOEs are inherently linked to the concept of feasible region (FR), which has been discussed for transmission networks in \cite{Wei2015} and for distribution networks in \cite{Riaz2022}. Geometrically, each DOE allocation strategy is linked to a feasible point on the boundary of the FR when it is calculated by a deterministic approach, such as the ones proposed in \cite{BL_ieee_access,Petrou2021}. 
\color{black} Contributions of the paper are summarised as follows.
\begin{enumerate}
    \item The formulation of the FR for DERs, along with its appropriate reformulation, is presented based on a linearised unbalanced three-phase optimal power flow (UTOPF) model. Formulating the FR first is for the convenience of considering uncertainties from network impedances and/or forecast errors. 
    \item The robust feasible region (RFR), which is a variation of the FR, while considering the studied uncertainties modelled as norm inequalities, is presented based on static robust optimisation theory, leading to deterministic convex formulations for calculating RDOEs.
\end{enumerate}

The proposed approach is tested and demonstrated efficiently on an illustrative network and a representative Australian network.

\color{black}
\section{Calculating DOEs via Deterministic UTOPF}
Based on UTOPF, a deterministic approach to calculating DOEs can be formulated as
\begin{subequations}\normalsize
	\label{oe_utopf}
	\begin{eqnarray}
		\label{oe_utopf-obj}
		\max_{P_m,Q_m}{r(P)}\\
		\label{oe_utopf-01}
		V^{\phi}_{i_\text{ref}}=V^{\phi}_{0}~~\forall \phi\\
		\label{oe_utopf-02}
		V^{\phi}_{i}-V^{\phi}_{j}=\sum\nolimits_\psi{z_{ij}^{\phi\psi}I^{\phi}_{ij}}~~\forall \phi,\forall ij\\
		\label{oe_utopf-03}
		\sum_{n:n\rightarrow i}{I^{\phi}_{ni}}-\sum_{m:i\rightarrow m}{I^{\phi}_{im}}=\sum_m{\frac{\mu_{\phi,i,m}(P^\text{}_{m}-\text{j}Q^\text{}_m)}{(V^\phi_{i})^*}}\nonumber\\~~\forall \phi,\forall i\neq i_\text{ref}\\
		\label{oe_utopf-05}
		V^\text{min}_{i}\le |V^{\phi}_{i}|\le V^\text{max}_{i}~ \forall \phi,\forall i
	\end{eqnarray}
\end{subequations}
where
$r(P)$ is the objective function, reflecting the efficiency and fairness in calculating DOEs, and can be in linear or convex quadratic forms.
$i_\text{ref}$ is the index of the reference bus and $V^{\phi}_{0}$ is its fixed voltage at phase $\phi$ (known parameter).
$V^{\phi}_{i}~(i\neq 0)$ is the voltage of phase $\phi$ at node $i$.
$I^{\phi}_{ni}$ is the current in phase $\phi$ of line $ni$: flowing from bus $n$ to bus $i$.
$P_m$ is the active power demand of customer $m$ while $Q_m$ is the reactive power demand. For simplicity, all $P_m$ and $Q_m$ are treated as variables in the formulation. However, they will be fixed to their forecasted values if they are uncontrollable.
$\mu_{\phi,i,m}\in\{0,1\}$ is a parameter indicating the phase connection of customer $m$ with its value being $1$ if it is connected to phase $\phi$ of bus $i$ and being $0$ otherwise.
$V^\text{min}_{i}$ and $V^\text{max}_{i}$ are the lower and upper limits of $|V_{i}|$, respectively.

In the formulation, the objective function aims at maximising $r(P)$ to obtain the desired DOEs, subject to \eqref{oe_utopf-01} specifying the voltage at the reference bus, \eqref{oe_utopf-02} formulating voltage drop in each line, \eqref{oe_utopf-03} assuring Kirchhoff's current law is satisfied, and \eqref{oe_utopf-05} representing voltage magnitudes constraints. It is noteworthy that only voltage magnitude constraints are considered in this letter and, however, other constraints can be conveniently incorporated.

Noting that for most distribution networks, differences of voltage angles in each phase are sufficiently small \cite{RN38} and nodal voltages throughout the network are around $1.0~p.u.$, \eqref{oe_utopf-03} can be linearised by fixing $V^\phi_i$ in the denominator on the right-hand side of \eqref{oe_utopf-05}, leading to a compact formulation of \eqref{oe_utopf} with linear constraints as
\begin{subequations}\normalsize\label{oe_utopf_com}
	\begin{eqnarray}
		\label{oe_utopf_com-obj}
		\max_{p_1,q_1}{r(p_1)}\\
		\label{oe_utopf_com-01}
		[A_1,A_2][p_1^T,p_2^T]^T+[B_1,B_2][q_1^T,q_2^T]^T+Cl=b\\
		\label{oe_utopf_com-02}
		Dv+El=d\\
		\label{oe_utopf_com-03}
		Fv\le f
	\end{eqnarray}
\end{subequations}
where
$p_1$ and $p_2$ are vectors related to active powers from active customers (VPP participants) and passive customers (the customers for which active powers need to be forecasted or estimated), respectively;
$q_1$ and $q_2$ are vectors consisting of reactive powers that are controllable, and that need to be forecasted or estimated, respectively;
$l$ and $v$ represent the vectors consisting of state variables related to line currents and nodal voltages, respectively;
$A=[A_1,A_2],B=[B_1,B_2],C,b,D,E,d,F$ and $f$ are constant parameters with appropriate dimensions.

\changed{It is noteworthy that the fixed value of $V^\phi_i$, say $\bar V^\phi_i$ can be estimated, for example, as $1.0\angle 0^\circ ~p.u.$, $1.0\angle 120^\circ ~p.u.$ and $1.0\angle -120^\circ ~p.u.$ for phase $a,b$ and $c$, or acquired from measurements from the network, to improve the accuracy of the linearised formulation further. More discussions on the linearisation accuracy will be presented and discussed in Section \ref{case_study}.}

In the formulation,
\eqref{oe_utopf_com-01} links back to \eqref{oe_utopf-03} after linearisation and represents the relations between line currents ($l$) and residential demands ($p_1$, $p_2$, $q_1$ and $q_2$);
\eqref{oe_utopf_com-02} represents the linearised power flow equations that link the bus voltages ($v$) and currents running in all lines ($l$), i.e. \eqref{oe_utopf-01}-\eqref{oe_utopf-02},
and \eqref{oe_utopf_com-03} represents all the operational constraints after the linearisation, i.e \eqref{oe_utopf-05}.

Noting that only $p_1$ and $q_1$ are independent variables, \eqref{oe_utopf_com} defines the FR, as a function of $q_1$, for $p_1$. Therefore, if all realised values of $p_1$ fall within the FR, the integrity of the network can be guaranteed. After removing state variables $v$ and $l$, the FR for $p_1$ can be expressed as the following polyhedron.
\begin{eqnarray}\label{fr}
	\mathcal{F}(q_1)=\left\{p_1\left\vert\begin{matrix}
		FD^{-1}EC^{-1}(A_1p_1+A_2p_2+B[q_1^T,q_2^T]^T\\-b) \le f-FD^{-1}d \\
	\end{matrix}\right.\right\}
\end{eqnarray}

\changed{It is noteworthy that both $C$ and $D$ can be proved to be invertible since both of them can be constructed from the connectivity matrix of all buses (excluding the reference bus) and all lines in a distribution network with radial topology.} Further, for the convenience of later discussions, we have the following proposition and its proof. 

\begin{proposition}\label{MTTC}
	The FR expressed as \eqref{fr} is equivalent to 
	\begin{eqnarray}\label{fr_new}
		\mathcal{F}(q_1)=\left\{p_1\left\vert\begin{array}{l}
			vec(E)^TH_i(A_1p_1+A_2p_2+B[q_1^T,q_2^T]^T\\-b)\le t_i~\forall i \\
		\end{array}\right.\right\}
	\end{eqnarray}
    where $vec(\cdot)$ is the vectorising operator for a matrix. \changed{For example, for $H=\begin{bmatrix}h_{11} & h_{12} \\ h_{21} & h_{22}\end{bmatrix}$, we have $vec(H)=[h_{11}, h_{21}, h_{12}, h_{22}]^T$.}
	$[\cdot]_i$ indicates the $i^{th}$ row of a matrix or the $i^{th}$ element of a vector.
	$H_i=C^{-1}\otimes ([F]_iD^{-1})^T$ with $\otimes$ being the Kronecker product, and $t_i=[f-FD^{-1}d]_i$.
\end{proposition}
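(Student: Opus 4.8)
The plan is to note that both \eqref{fr} and \eqref{fr_new} express $\mathcal{F}(q_1)$ as a finite list of scalar linear inequalities in $p_1$, one for each row index $i$ of $F$, and then to verify that the $i$-th inequality of the first list is literally the $i$-th inequality of the second. The right-hand sides agree by the definition $t_i=[f-FD^{-1}d]_i$; and both left-hand sides are obtained by applying a row vector to the common argument $w:=A_1p_1+A_2p_2+B[q_1^T,q_2^T]^T-b$. Hence the whole proposition reduces to the algebraic row identity
\begin{equation*}
[FD^{-1}EC^{-1}]_i \;=\; vec(E)^T H_i \;=\; vec(E)^T\!\left(C^{-1}\otimes\bigl([F]_iD^{-1}\bigr)^{T}\right),
\end{equation*}
where the inverses of $C$ and $D$ make sense by the invertibility asserted in the paragraph just before the proposition.

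To establish this identity I would invoke the standard vectorisation--Kronecker relation $vec(PXQ)=(Q^{T}\otimes P)\,vec(X)$, which is the one consistent with the column-stacking convention for $vec(\cdot)$ fixed in the statement. Setting $a:=\bigl([F]_iD^{-1}\bigr)^{T}$ (a column vector) and $M:=C^{-1}$, the target reads $a^{T}EM=vec(E)^{T}(M\otimes a)$. I would start from the right-hand side and transpose it, $(M\otimes a)^{T}vec(E)=(M^{T}\otimes a^{T})\,vec(E)$, then apply the relation above with $P=a^{T}$, $X=E$, $Q=M$ to recognise this as $vec\!\left(a^{T}EM\right)$. Since $a^{T}EM$ is a row vector, its column-stacking vectorisation is simply $\bigl(a^{T}EM\bigr)^{T}$; transposing back gives $vec(E)^{T}(M\otimes a)=a^{T}EM=[F]_iD^{-1}EC^{-1}=[FD^{-1}EC^{-1}]_i$, which is the row identity sought.

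With this identity the conclusion is immediate: for every $i$, the constraint $[FD^{-1}EC^{-1}w]_i\le[f-FD^{-1}d]_i$ appearing in \eqref{fr} coincides with the constraint $vec(E)^{T}H_iw\le t_i$ appearing in \eqref{fr_new}, so the two sets of inequalities define the same polyhedron and therefore the same $\mathcal{F}(q_1)$. I expect the only real care to be needed in matching transposes and Kronecker factors against the column-major $vec$ convention of the statement; an alternative entrywise check, expanding both sides over the index sets of $E$, $C^{-1}$ and $[F]_iD^{-1}$, would also work but is considerably more tedious, so I would favour the compact vec--Kronecker argument.
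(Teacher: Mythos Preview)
Your proposal is correct and follows essentially the same approach as the paper's own proof: both isolate the $i$-th inequality, identify the right-hand side as $t_i$ by definition, and reduce the left-hand side to the row identity $[F]_iD^{-1}EC^{-1}=vec(E)^{T}\bigl(C^{-1}\otimes([F]_iD^{-1})^{T}\bigr)$ via the standard relation $vec(PXQ)=(Q^{T}\otimes P)\,vec(X)$. The only cosmetic difference is that the paper applies the identity directly to $vec([F]_iD^{-1}EC^{-1})$ and then transposes, whereas you transpose first and apply the identity with $P=a^{T}$; the content is identical.
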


\begin{proof}
	From \eqref{fr}, it is obvious that the $i^{th}$ inequality expression is
	\begin{eqnarray}
		[F]_iD^{-1}EC^{-1}(A_1p_1+A_2p_2+B[q_1^T,q_2^T]^T-b)
			\nonumber\\\le [f-FD^{-1}d]_i=t_i
	\end{eqnarray}
	For the term $[F]_iD^{-1}EC^{-1}$, we have \cite{matrixB_book}
	\begin{eqnarray}
		\label{proof_01}
		vec([F]_iD^{-1}EC^{-1})=(C^{-T}\otimes [F]_iD^{-1})vec(E)
	\end{eqnarray}
	which leads to 
	\begin{eqnarray}
		\label{proof_03}
		vec(E)^T(C^{-1}\otimes ([F]_iD^{-1})^T)w=[F]_iD^{-1}EC^{-1}w
	\end{eqnarray}
	with $w=(A_1p_1+A_2p_2+B[q_1^T,q_2^T]^T-b)$, which proves the proposition. 
\end{proof}


Therefore, seeking DOEs through the deterministic approach with controllable $q_1$ is equivalent to solving
\begin{eqnarray}\label{max_oe}
    \max\nolimits_{(p_1,q_1)}\{r(p_1)|s.t.~p_1\in\mathcal{F}(q_1)\}
\end{eqnarray}
and one typical formulation of the objective function, which will be used in this paper later, is $r(p_1)=1^Tp_1$.

\section{Robust Operating Envelopes}
\subsection{Uncertainty Modelling}
Comparing \eqref{oe_utopf} and \eqref{oe_utopf_com}, errors in forecasting $P_m$ for passive customers and uncontrollable $Q_m$, and inaccuracies in $z^{\phi\psi}_{ij}$ will lead to uncertainties in $p_2,q_2$ and $E$, respectively. In this letter, such uncertainties are formulated as
\begin{subequations}\normalsize\label{uset}
	\begin{eqnarray}
		\label{uset_E}
		\mathcal{E}=\mathcal{E}_1\cap\mathcal{E}_2=\{E|vec(E)=e_1+J_1x, \norm{x}_\infty\le \gamma_1\}\nonumber\\
		\cap\{E|vec(E)=e_2+ J_2x, \norm{x}\le \gamma_2\}\\
		\label{uset_p}
		\mathcal{P}=\mathcal{P}_1\cap\mathcal{P}_2=\{p_2|p_2= u_1+ U_1y, \norm{y}_\infty\le \rho_1 \}\nonumber\\
		\cap\{p_2|p_2=u_2+ U_2y, \norm{y}\le \rho_2 \}\\
		\label{uset_q}
		\mathcal{Q}=\mathcal{Q}_1\cap\mathcal{Q}_2=\{q_2|q_2=w_1+ W_1z, \norm{z}_\infty\le \theta_1 \}\nonumber\\
		\cap\{q_2|q_2=w_2+ W_2z, \norm{z}\le \theta_2 \}
	\end{eqnarray}
\end{subequations}
where $e_i,u_i,w_i,J_i,U_i,W_i,\gamma_i,\rho_i$ and $\theta_i$ are constant parameters describing the uncertainty sets, and $x,y,z$ are random variables;
The $\infty$-norm constraint in $\mathcal{E}_1,\mathcal{P}_1$ and $\mathcal{Q}_1$ provides a general lower/upper bound for the random variable while the norm constraint in $\mathcal{E}_2,\mathcal{P}_2$ and $\mathcal{Q}_2$, which can take 1/2/$\infty$-norm or other types of norms, is to further reduce the conservativeness of the uncertainty set.

Several remarks on uncertainty modelling are given below.
\begin{enumerate}
	\item Constant parameters can be chosen depending on the physical truth or historical error distributions. For example, if $E$ is usually within 10\% error of $\bar E$, where $\bar E$ is the nominal value of $E$, we can set $\mathcal{E}=\mathcal{E}_1$, $e_1=vec(\bar E)$, $J_1=diag(e_1)$, and $\gamma_1=10\%$. As another example, if $p_2$ falls in $[0,p^\text{max}_2]$ and its forecast error follows a multivariate normal distribution with expectation and covariance being $0$ and $\Sigma$ respectively and 2-norm is used in $\mathcal{P}_2$, $u_1$ can be set as a vector with all its elements being $p^\text{max}_2/2$, $U_1=diag(u_1)$ and $\rho_1=1$ for $\mathcal{P}_1$. Noting that $y^T\Sigma y$ follows a \emph{Chi-square} distribution with freedom degrees of $n$, i.e. $y\sim\chi^2_n$, we can set $u_2=\bar p_2$ in $\mathcal{P}_2$ with $\bar p_2$ being the forecasted value of $p_2$, $U_2=diag(u_2)$ and $\rho_2=(\chi^2_{n,1-\epsilon})^{1/2}$ to guarantee that $p_2$ now falls within $\mathcal{P}_2$ with a confidence level of $1-\epsilon$.
	\item Constant parameters can also be chosen depending on the confidence level of satisfying \eqref{fr_new}, leading to equivalent chance-constrained optimisation problems. This, however, is beyond the scope of this letter, and interested readers are referred to \cite{Bertsimas2022} for more discussions.
	\item $\mathcal{E}$, $\mathcal{P}$ and $\mathcal{Q}$ can be formulated as other types of convex sets, which, however, may affect the tractability of the formulated problem if two or more uncertainties co-exist. More discussions will be provided in the next section.
\end{enumerate}

\subsection{Robust DOEs}
For the convenience of discussion, we here assume both $q_1$ and $q_2$ are controllable, thus removing uncertainties in $q_2$. However, similar to dealing with uncertainty in $p_2$, the proposed approach can be easily extended to the case when uncertainty in $q_2$ exists.

Since the optimisation problem \eqref{max_oe} only contains linear inequality constraints \eqref{fr_new}, the essential idea in seeking RDOEs is to make sure \eqref{fr_new} is always satisfied for any realisation of uncertain parameters. To get the robust counterpart (RC) of \eqref{max_oe}, the equivalent reformulation of \eqref{fr_new}, considering the uncertainties that are bounded by \eqref{uset}, should be derived. Taking a generic formulation $f(\varepsilon,x)\le 0$, where $x$ is a variable and $\varepsilon$ is an uncertain parameter belonging to $\mathcal{E}=\{g(\varepsilon)\le 0\}$, as an example, its RC formulation is 
\begin{eqnarray}\label{dual_01}
\max_{\varepsilon\in\mathcal{E}}{f(\varepsilon,x)}=\min_{\alpha\ge 0}\max_{\varepsilon}{f(\varepsilon,x)-\alpha g(\varepsilon)}\le 0
\end{eqnarray}

With the following fact or assumption, \eqref{dual_01} can then be reformulated as deterministic linear or other convex constraints. 
\begin{enumerate}
	\item If the \emph{min} operator is on the left-hand side of a \emph{less-than-or-equal-to} constraint, it can be safely removed. For example $h(x,\beta)\le 0$ can always guarantee that $\min_\beta{h(x,\beta)}\le 0$.
	\item Under certain circumstances (for example, $f(\varepsilon,x)$ being linear and $\mathcal{E}$ being norm constraints, i.e. $\mathcal{E}=\{\varepsilon|\norm{\varepsilon}\le \bar\varepsilon\}$), $\max_{\varepsilon}{f(\varepsilon,x)-\alpha g(\varepsilon)}$ can be expressed in an equivalent deterministic form without the \emph{max} operator.
\end{enumerate}

Next, we will discuss how such reformulation techniques can be applied in deriving the RDOE formulation under various uncertainty models. 

\subsubsection{With uncertainty only in E}\label{sec_uset_E}
\color{black}
Fixing $p_2$ at $\bar p_2$ and denoting $h_i=H_i(A_1p_1+A_2\bar p_2+Bq- b)$, the inequality expression in \eqref{fr_new} with any realisation of uncertain $E$ is equivalent to
\begin{eqnarray}\label{case_1_1}
    \max_{E\in\mathcal{E}}{vec(E)^T H_i(A_1p_1+A_2\bar p_2+Bq- b)}\le t_i
\end{eqnarray}

For the left-hand side of \eqref{case_1_1}, we further have
\begin{subequations}\normalsize
    \begin{eqnarray}
		\max_{E\in\mathcal{E}}{vec(E)^T H_i(A_1p_1+A_2\bar p_2+Bq- b)}=\delta^*(h_i|\mathcal{E})\nonumber\\
		=\min_{\tau_{i1},\tau_{i2}}\{\delta^*(\tau_{i1}|\mathcal{E}_1)+\delta^*(\tau_{i2}|\mathcal{E}_2))\big|\tau_{i1}+\tau_{i2}=h_i\}\nonumber\\
		\label{rfr_E_delta}
		=\min_{\tau_{i1},\tau_{i2}}\{\sum\nolimits_j(e_j^T\tau_{ij}+\delta^*(J_j^T\tau_{ij}|\mathcal{X}_j))\big|\sum\nolimits_j\tau_{ij}=h_i\}\\
		\label{rfr_E}
		=\min_{\tau_{i1},\tau_{i2}}\{\sum\nolimits_j e_j^T\tau_{ij}+\gamma_1||J_1^T\tau_{i1}||_1+\gamma_2||J_2^T\tau_{i2}||_*)\nonumber\\\big|\sum\nolimits_j\tau_{ij}=h_i\}
	\end{eqnarray}
\end{subequations}
where $\delta^*(y|\mathcal{X})=\sup_{x\in\mathcal{X}}{y^Tx}$ is the conjugate function of the support function $\delta(x|\mathcal{X})$, $\mathcal{X}_1=\{x|\norm{x}_\infty\le\gamma_1\}$, $\mathcal{X}_2=\{x|\norm{x}\le\gamma_2\}$ and $||\cdot||_*$ represents the dual norm operator. Moreover, $\delta^*(y|\mathcal{X})$ is always a convex function \cite{cvxbook}.

After safely removing the \emph{min} operator in \eqref{rfr_E}, \eqref{case_1_1} can be reformulated as \cite {Bertsimas2022}
\begin{subequations}\small\label{case_1_2}
	\begin{eqnarray}
		\sum\nolimits_j e_j^T\tau_{ij}+\gamma_1||J_1^T\tau_{i1}||_1+\gamma_2||J_2^T\tau_{i2}||_*)\le t_i\\
        \sum\nolimits_j\tau_{ij}=h_i
	\end{eqnarray}
\end{subequations}

As a result, \eqref{case_1_2} defines the robust FR (RFR) that is robust to uncertain $E$, and maximising $r(p_1)$ over $(p_1,q,\tau_{ij})$ in this RFR will report the desired RDOEs. The final optimisation problem with the objective maximising the total DOE can be formulated as $\max_{p_1,q}\{r(p_1)|s.t.~\eqref{case_1_2}\}$.

\subsubsection{With uncertainty only in $p_2$}\label{sec_uset_b}
With $vec(E)$ fixed at $\bar e$ and denoting $g_i=A_2^TH_i^T\bar e$, for the $i^{th}$ constraint in \eqref{fr_new}, reformulating \eqref{fr_new} while considering uncertainty in $p_2$ leads to
\begin{subequations}\normalsize
	\begin{eqnarray}
		\max_{p_2\in\mathcal{P}}{g_i^Tp_2}=\delta^*(g_i|\mathcal{P})\nonumber\\
		=\min_{\phi_{i1},\phi_{i2}}\{\delta^*(\phi_{i1}|\mathcal{P}_1)+\delta^*(\phi_{i2}|\mathcal{P}_2))\big|\phi_{i1}+\phi_{i2}=g_i\}\nonumber\\
		\label{rfr_b_delta}
		=\min_{\phi_{i1},\phi_{i2}}\{\sum\nolimits_j(u_j^T\phi_{ij}+\delta^*(U_j^T\phi_{ij}|\mathcal{Y}_j))\big|\sum\nolimits_j\phi_{ij}=g_i\}\\
		\label{rfr_b}
		=\min_{\phi_{i1},\phi_{i2}}\{\sum\nolimits_j u_j^T\phi_{ij}+\rho_1||U_1^T\phi_{i1}||_1+\rho_2||U_2^T\phi_{i2}||_*)\nonumber\\\big|\sum\nolimits_j\phi_{ij}=g_i\}
		\le t_i-\bar e^TH_i(A_1p_1+Bq-b)
	\end{eqnarray}
\end{subequations}
where $\mathcal{Y}_1=\{y|\norm{y}_\infty\le\rho_1\}$ and $\mathcal{Y}_2=\{y|\norm{y}\le\rho_2\}$.

Similar to the derivation of the RFR with uncertain $E$, removing the \emph{min} operator in \eqref{rfr_b} also leads to an RFR that is robust against uncertain $p_2$. The final optimisation problem to maximise the total DOE can thus be formulated as
\begin{subequations}\label{max_oe_p2}
	\begin{eqnarray}
		\max_{p_1,q}r(p_1)\\        u_1^T\phi_{i1}+u_2^T\phi_{i2}+\rho_1||U_1^T\phi_{i1}||_1+\rho_2||U_2^T\phi_{i2}||_*\nonumber\\
			\le t_i-\bar e^TH_i(A_1p_1+Bq-b)~\forall i\\
		\phi_{i1}+\phi_{i2}=g_i~\forall i
	\end{eqnarray}
\end{subequations}

\color{black}
\subsubsection{With uncertainties in both E and $p_2$}\label{sec_uset_Eb}
In this case, bilinear uncertainty exists in \eqref{fr_new}, making the RC reformulation generally intractable. However, as discussed in \cite{Bertsimas2022}, a tractable reformulation is achievable when the uncertainty set follows specific types. One case is when $\mathcal{P}$ is formulated as 
\begin{eqnarray}\label{bi_ucty}
	\mathcal{P}=\mathcal{P}_1\cap\mathcal{P}_2
	=\{p_2|u+Uy,y\in\mathcal{Y}=\mathcal{Y}_1\cap\mathcal{Y}_2\}
\end{eqnarray}
where $u=u_1=u_2$, $U=U_1=U_2$, $\mathcal{Y}_1=\{y|\norm{y}_\infty\le\rho_1\}$ and $\mathcal{Y}_2=\{y|\norm{y}_1\le n_t\rho_1\}$ with $n_t\le n$, and $n$ is the cardinality of $y$.

Obviously, there is a total number of $2^{n_t}\tbinom{n}{n_t}$ extreme points in $\mathcal{Y}$. As a special case, when $n_t=1$, the $2n$ extreme points in $\mathcal{Y}$ can be expressed as $\{\pm y_1, \cdots,\pm y_k, \cdots, \pm y_n\}$, where $y_k\in\mathbb{R}^{n\times 1}$ is a vector with the $k^{th}$ element being $\rho_1$ and all the other elements being 0. Based on \eqref{rfr_E}, reformulating the $i^{th}$ constraint in \eqref{fr_new} while considering uncertainty in both $E$ and $p_2$ leads to
\begin{subequations}
	\begin{eqnarray}\label{rfr_Eb}
		\min_{\tau_{i1,k},\tau_{i2,k}}\{\sum\nolimits_j e_j^T\tau_{ij,k}+\gamma_1||J_1^T\tau_{i1,k}||_1+\gamma_2||J_2^T\tau_{i2,k}||_*\nonumber\\\big|\sum\nolimits_j\tau_{ij,k}=h_i(y_k)\}
		\le t_i~\forall k~~\\
		\min_{\lambda_{i1,k},\lambda_{i2,k}}\{\sum\nolimits_j e_j^T\lambda_{ij,k}+\gamma_1||J_1^T\lambda_{i1,k}||_1+\gamma_2||J_2^T\lambda_{i2,k}||_*\nonumber\\\big|\sum\nolimits_j\lambda_{ij,k}=h_i(-y_k)\}
		\le t_i~\forall k~~
	\end{eqnarray}
\end{subequations}
where $h_i(\pm y_k)=H_i(A_1p_1+A_2u \pm A_2Uy_k+Bq-b)$.

Similarly, the final equivalent deterministic formulation to maximise the total DOE can be formulated as
\begin{subequations}\label{max_oe_Ep2}
	\begin{eqnarray}
		\max_{p_1,q}r(p_1)\\
		\sum\nolimits_j e_j^T\tau_{ij,k}+\gamma_1||J_1^T\tau_{i1,k}||_1+\gamma_2||J_2^T\tau_{i2,k}||_*\nonumber\\\le t_i
				~\forall i,\forall k \\
		\sum\nolimits_j e_j^T\lambda_{ij,k}+\gamma_1||J_1^T\lambda_{i1,k}||_1+\gamma_2||J_2^T\lambda_{i2,k}||_*\nonumber\\\le t_i
				~\forall i,\forall k \\
		\sum\nolimits_j\tau_{ij,k}=h_i(y_k)~\forall i,\forall k\in\{1,2,\cdots\}\\
		\sum\nolimits_j\lambda_{ij,k}=h_i(-y_k)~\forall i,\forall k\in\{1,2,\cdots\}
	\end{eqnarray}
\end{subequations} 

Several remarks on calculating RDOEs are given below.
\begin{enumerate}
	\item In this letter, a \emph{strictly equal} allocation strategy, i.e. DOEs of all active customers being equal to each other, will be used, leading to a linear formulation of the objective function: $r(p_1)=1^Tp_1$ subject to $p_{1,i}=p_{1,j}~(\forall i\neq j)$. However, other objective functions can also be applied.
	\item Noting that \eqref{fr_new} is linear in $E$, in $p_2$ and in $q_2$, a tractable RC for this constraint can always be derived with single uncertainty, i.e. when uncertainty appears only in $E$ or only in $p_2$, and if the uncertainty set is convex. When bilinear uncertainty exists, a tractable RC formulation is achievable if there is a finite number of extreme points for at least one uncertainty set, as shown in \eqref{rfr_Eb}. However, enumerating all extreme points itself can be difficult.
	\item The problem \eqref{max_oe} with single uncertainty becomes a linear programming (LP) problem when 1-norm or $\infty$-norm is used in \eqref{uset}, and becomes second-order cone programming (SOCP) problem when 2-norm is used. However, the RC of \eqref{max_oe} with single uncertainty is always a convex programming problem if the uncertainty set is convex.
	\item Compared with \eqref{fr_new}, a \emph{buffer} term is added to the left side of \eqref{fr_new} in its RC, leading to enhanced \emph{robustness} of the solution. When $\gamma_i$, $\rho_i$, or $\theta_i$ equals 0, the RC deteriorates to the deterministic formulation.
	\item Due to the unbalances and mutual couplings of all phases in a distribution network and the fact that the active power of a VPP customer may vary between 0 kW and its allocated DOE, there is another type of uncertainty related to the difference between the optimal solution of $p_2$ and its realised value $\hat p_2$. Although this is beyond the scope of this letter, such uncertainty can be addressed by: 1) taking the approach proposed in \cite{liu2022robust} on top of an RFR based on the approach proposed in this letter; 2) employing the strictly equal allocation strategy, which is already taken in this letter, noting that such an allocation strategy, even with a deterministic DOE calculation approach, can provide satisfactory robustness, as demonstrated in \cite{liu2022robust}.
	\item When extra constraints on $p_1$ and $q_1$ exist, an additional constraint $L_1p_1+L_2p_2\le r$ can be added to \eqref{fr} and \eqref{fr_new}.
\end{enumerate}

\section{Case Study}\label{case_study}
\subsection{Case setup}
Two distribution networks, one of which is a 2-bus illustrative network (TwbNetwork) and the other one is a real Australian network (AusNetwork), will be studied.
For the illustrative network, where its topology is presented in Fig.~\ref{fig_2_bus_topology}, an ideal balanced voltage source with the voltage magnitude being $1.0~p.u.$ is connected to bus 1. A three-phase line connects bus 1 and bus 2, and its impedance matrix can be found in \cite{liu2022robust}.
\begin{figure}[htpb!]
	\centering\includegraphics[scale=0.1]{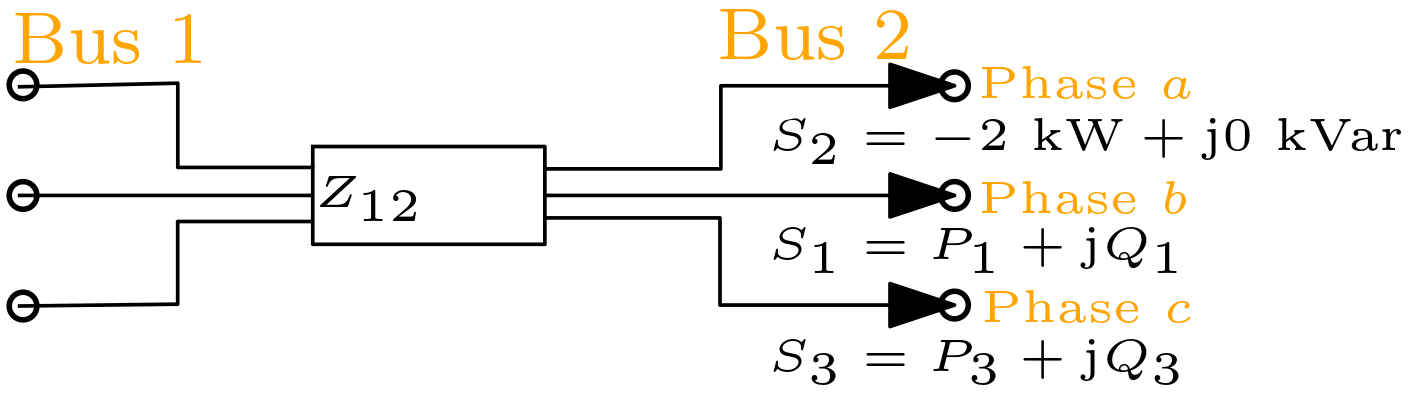}
	\caption{\small Network topology of the 2-bus illustrative example.}
	\label{fig_2_bus_topology}
\end{figure}

Of the three customers, $S_2=P_2+\text{j}Q_2$ is fixed while $P_1, P_3, Q_1$ and $Q_3$ are to be optimised with $r(P)=-P_1-P_3$ with $P_1=P_3$, aiming at maximising the total exports from customers 1 and 3. Moreover, the default export/import limits for both customers are set as 7 kW, and controllable reactive powers are assumed to be within [-1 kVar, 1 kVar]. Lower and upper voltage magnitude limits are set as $0.95~p.u.$ and $1.05~p.u.$, respectively.
The Australian network has 33 buses and 87 customers, of which 30 are VPP participants whose DOEs are to be calculated. For the remaining 57 customers, their reactive powers are fixed, while the active powers are treated as uncertain parameters. The default limits on active and reactive powers are the same as those in the illustrative network, and other data can be found in \cite{liu2022robust}.

For network impedances, $x$ in \eqref{uset_E} refers to the mutual impedances of line $12$ for the illustrative network and refers to the positive, negative and zero-sequence impedances of all line codes of lines ``46-47", ``69-67", ``49-50", ``40-41", ``54-59", ``45-50", ``67-68", ``44-45", ``61-62" and ``52-54" for the Australian network.

\color{black}
\subsection{Errors from the linearised model}
This section will investigate the accuracy of the employed linearised model based on the AusNetwork, where the given voltage for phase $a,b$ and $c$ are respectively set as $1.0\angle 0^\circ$,  $1.0\angle -120^\circ$ and  $1.0\angle 120^\circ$ for all buses. 

The average and maximum voltage magnitude (VM) errors when the demand for each of the active customers is at 1 kW (low customer load) and 3 kW (high customer load)\footnote{The value 3 kW is taken noting that RDOE calculated for each of the customers is around 3 kW.}, under both exporting and importing statuses, are presented in Table \ref{fig_linear_accuracy}. When customers' demands are at a high level, the average and maximum errors are respectively around 0.23\% and 0.59\% when they are exporting powers to the grid, and are respectively at around 0.83\% and 1.78\% when they are importing powers, demonstrating that the linearisation approach can achieve acceptable accuracy for RDOE calculation. However, it is expected that errors will become more significant when true nodal voltages deviate from the given voltage points, which can occur when customers are exporting or importing powers at very high levels.  
\begin{table}[htbp]\footnotesize\renewcommand\arraystretch{0.98}
\centering
\setlength{\tabcolsep}{8 pt}
  \caption{\small The errors in voltage magnitude (VM) taking the linear UTPF model (customer high/low load: 3 kW/1 kW).}
    \begin{tabular}{c|c|c|c}
    \hline\hline
    \multirow{2}{*}{Customer Status} & \multirow{2}{*}{Customer Load} & \multicolumn{2}{c}{VM Error (p.u.)} \\
\cline{3-4}          &      & Avg.  & Max. \\
    \hline
    \multirow{2}{*}{Export} & High  & 0.002336     & 0.005877 \\
\cline{2-4}          & Low   &  0.000125     & 0.000298 \\
    \hline
    \multirow{2}{*}{Import} & High  & 0.008268     & 0.017820 \\
\cline{2-4}          & Low   &   0.001776     & 0.003675 \\
    \hline\hline
    \end{tabular}%
  \label{fig_linear_accuracy}%
\end{table}%

The nodal voltages for all three phases throughout the whole network are also presented in Fig.\ref{fig_lin_errors_exp} when all active customers are exporting at 3 kW, which clearly shows that the voltages calculated by the linearised unbalanced three-phase power flow (LIN-UTPF) and by the non-convex UTPF (NCVX-UTPF) are very close to each other under this scenario. 
\begin{figure}[htpb!]
	\centering\includegraphics[scale=0.32]{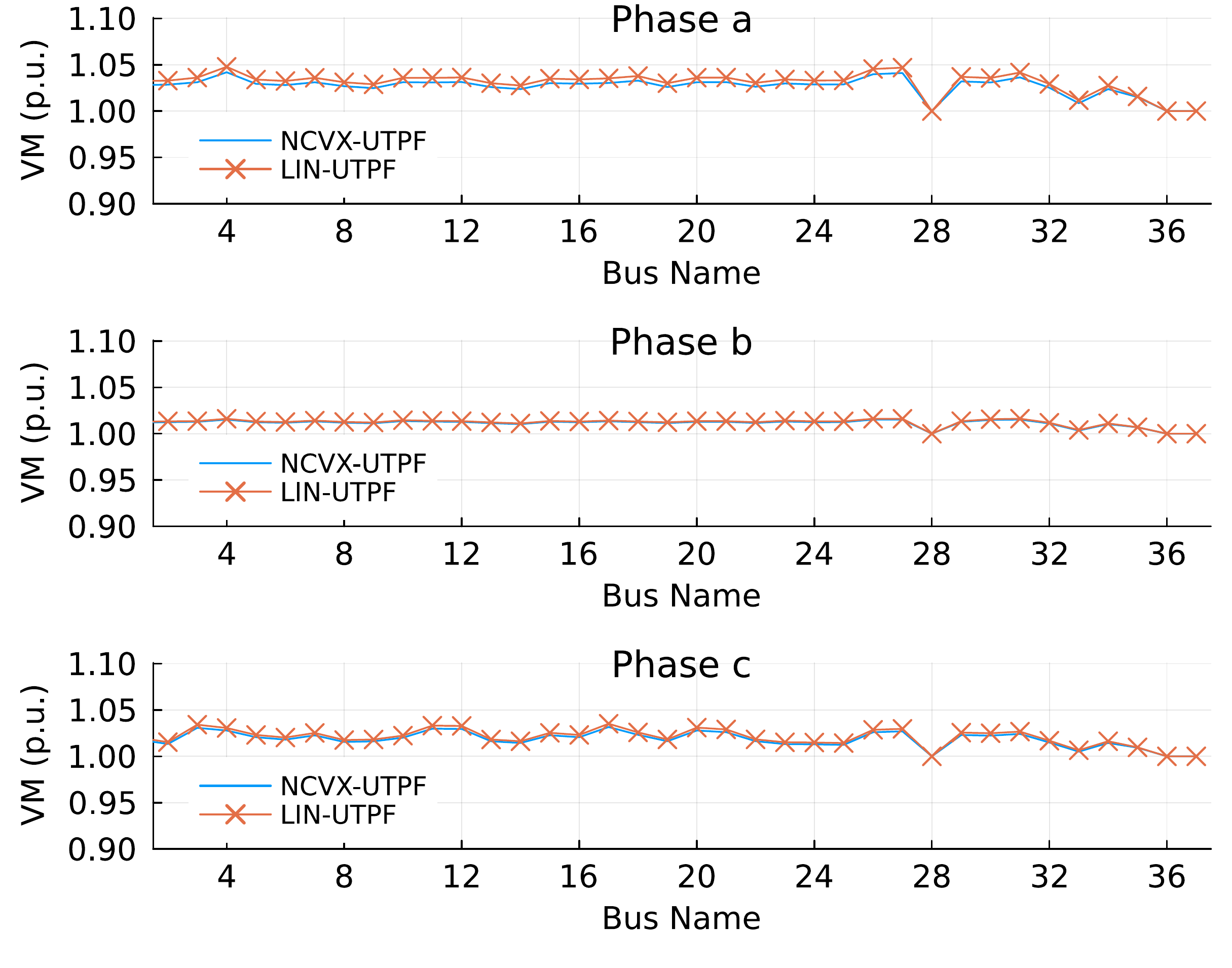}
	\caption{\small Nodal voltage magnitude under the exact non-convex UTPF (NCVX-UTPF) and linearised UTPF (LIN-UTPF) for the AusNetwork, where all active customers are exporting at 3 kW.}
	\label{fig_lin_errors_exp}
\end{figure}

However, we admit that the errors brought by the linearisation approach are inevitable and, in some cases, may be high. Thus, more efforts are needed in this area. One of the approaches to improving the accuracy is by iteratively updating the \emph{given} voltage points used to linearise the model. Specifically, after solving the optimisation model with the optimal solutions of $p$ and $q$ as $p^*$ and $q^*$, the optimal solution for $v$ after this iteration can be expressed as follows based on \eqref{oe_utopf_com-01}-\eqref{oe_utopf_com-02}. 
    \begin{eqnarray}\label{v_sol}
        v^*=D^{-1}EC^{-1}(Ap^*+Bq^*-b)+D^{-1}d
    \end{eqnarray} 

Then, matrix $C$, which depends on the given voltage points, can be updated further, followed by the re-calculation of the RDOE. The effectiveness of the iteration-based approach has been demonstrated in \cite{BL_cloud} and is omitted here for simplicity. 

\color{black}
\subsection{The calculated RDOEs}
Simulation results are presented in Table \ref{tab_case_study} and Fig.\ref{fig_2_bus_simu} with all optimisation problems solved by \texttt{Mosek} (version 10.0) \cite{mosek} on a laptop with Intel(R) Core(TM) i7-8550U CPU and 16 GB RAM, where the DOEs calculated by deterministic approach, denoted as DDOEs, are also presented for comparison purposes. \changed{Moreover, the compact formulation of the problems in this paper is realised with the assistance of \texttt{Julia} packages \texttt{MathOptInterface.jl}, \texttt{JuMP.jl} and \texttt{PowerModelsDistribution.jl} \cite{pmd_ref}.}
\begin{table}[htbp!]\footnotesize\renewcommand\arraystretch{0.98}
	\centering
	\setlength{\tabcolsep}{0.05 pt}
	\caption{\small DDOEs and RDOEs (negative values as \emph{export limit}, and values on the \emph{left/right} side in the table means: with $q_1$ being 0 kvar/with optimised $q_1$) and their computational times \changed{(including time for setting up the optimisation model)} under various uncertainties: A) $q_1=0$, $\mathcal{E}=\mathcal{E}_1$ ($\mathcal{E}_2$ not considered) with $e_1=vec(\bar E)$ and $J_1=diag(e_1)$, and B) $\mathcal{P}=\mathcal{P}_2$ ($\mathcal{P}_1$ not considered) with $u_2=\bar p_2$ and $U_2=diag(u_2)$.}
	\begin{tabular}{c|c|c|c|c|c}
		\hline\hline
		\multirow{2}{*}{Uncer.}   & \multirow{2}{*}{Norm}                                                                                                                                           & \multicolumn{2}{c|}{Optimal Obj. (kW)}                                                                                & \multicolumn{2}{c}{Time ($\times 10^{-2}$seconds)}                           \\	\cline{3-6}
		                            &                                                                                                                                                                  & TwbNetwork                                                  & AusNetwork                                               & TwbNetwork                                                & AusNetwork \\	\hline
		Deter.                      & N/A                                                                                                                                                              & -9.9/-13.7                    & -78.7/-112.3                          & 0.7/0.8                    & 3.7/4.9       \\	\hline
		\multirow{3}{*}{$E$}     & $\infty(\gamma_1=5.0\%$)                                                                                           & -9.2/-12.8        & -49.2/-58.3              & 4.3/5.4        & 36.0/57.8       \\
		                            & $\infty(\gamma_1=10.0\%$)                                                                                           & -8.6/-11.9        & -36.3/-43.1              & 3.9/5.3        & 36.2/61.6       \\	\hline
		\multirow{3}{*}{$p_2$}   & 1($\rho_2=20.0\%$)                                                                                                          & -9.4/-13.2                      & -78.1/-111.7                            & 1.0/1.1                      & 44.2/58.3      \\
		                            & 2($\rho_2=20.0\%$)                                                                                                          & -9.4/-13.2                      & -77.8/-111.4                            & 1.1/0.8                      & 20.2/36.1      \\
		                            & $\infty(\rho_2=20.0\%$)                                                                                                  & -9.4/-13.2                  & -75.4/-109.0                        & 0.7/1.0                  & 68.9/162.6      \\	\hline
		$(E,p_2)$                 & \makecell{$(\infty,1)$ \\($\gamma_1=5.0\%$,\\ $\rho_2=20.0\%$)}     & -8.7/-12.3    & -48.7/-56.1  & 30.0/31.1    & 2856.0/5839.0      \\
		\hline\hline
	\end{tabular}
	\label{tab_case_study}
\end{table}

\begin{figure}[htpb!]
	\centering
	\begin{subfigure}[b]{0.24\textwidth}
		\centering
		\includegraphics[width=\textwidth]{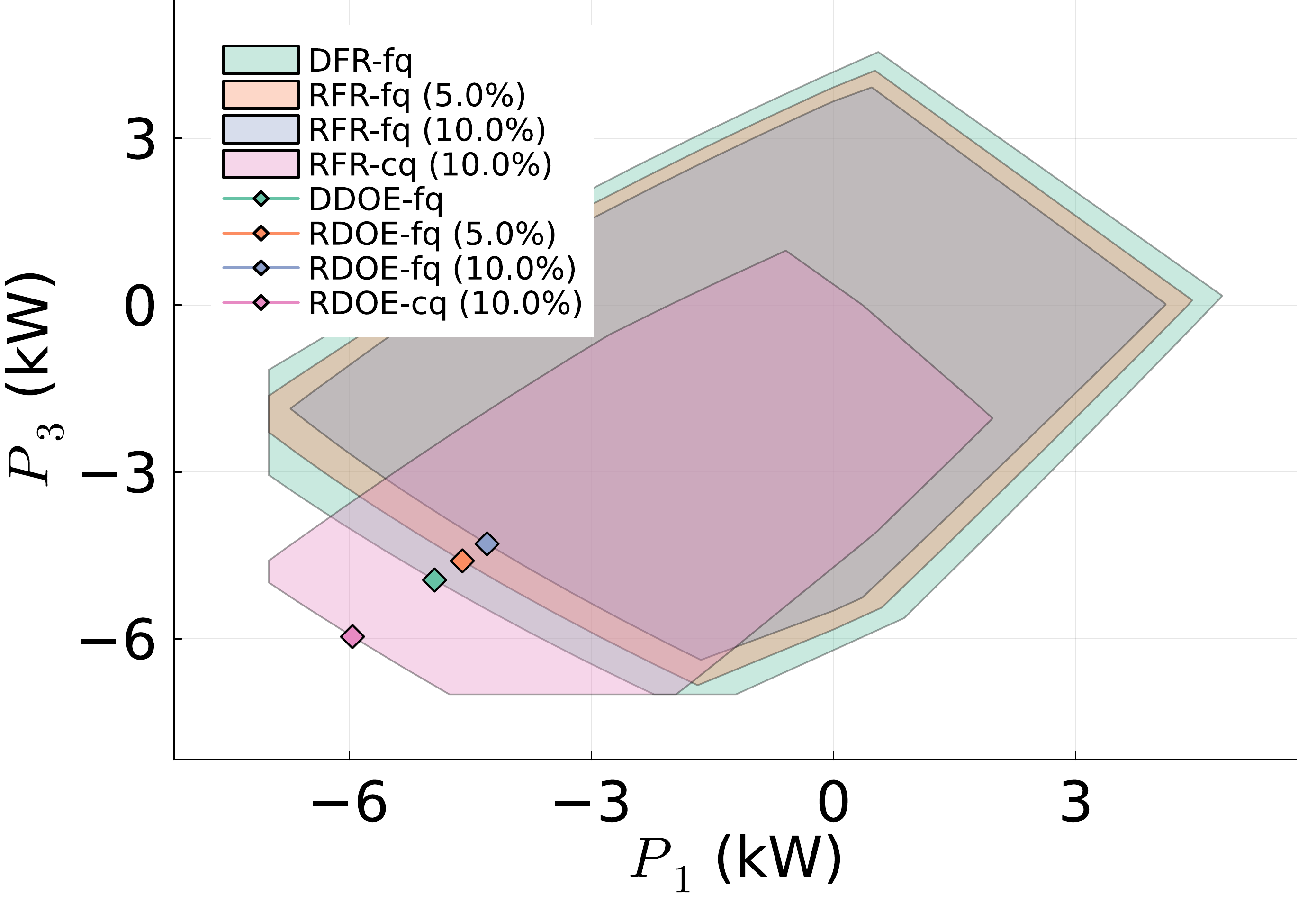}
		\caption{\small Uncertainty only in $E$.}
		\label{fig_FRs_E_2_bus}
	\end{subfigure}
	\hfill
	\begin{subfigure}[b]{0.24\textwidth}
		\centering
		\includegraphics[width=\textwidth]{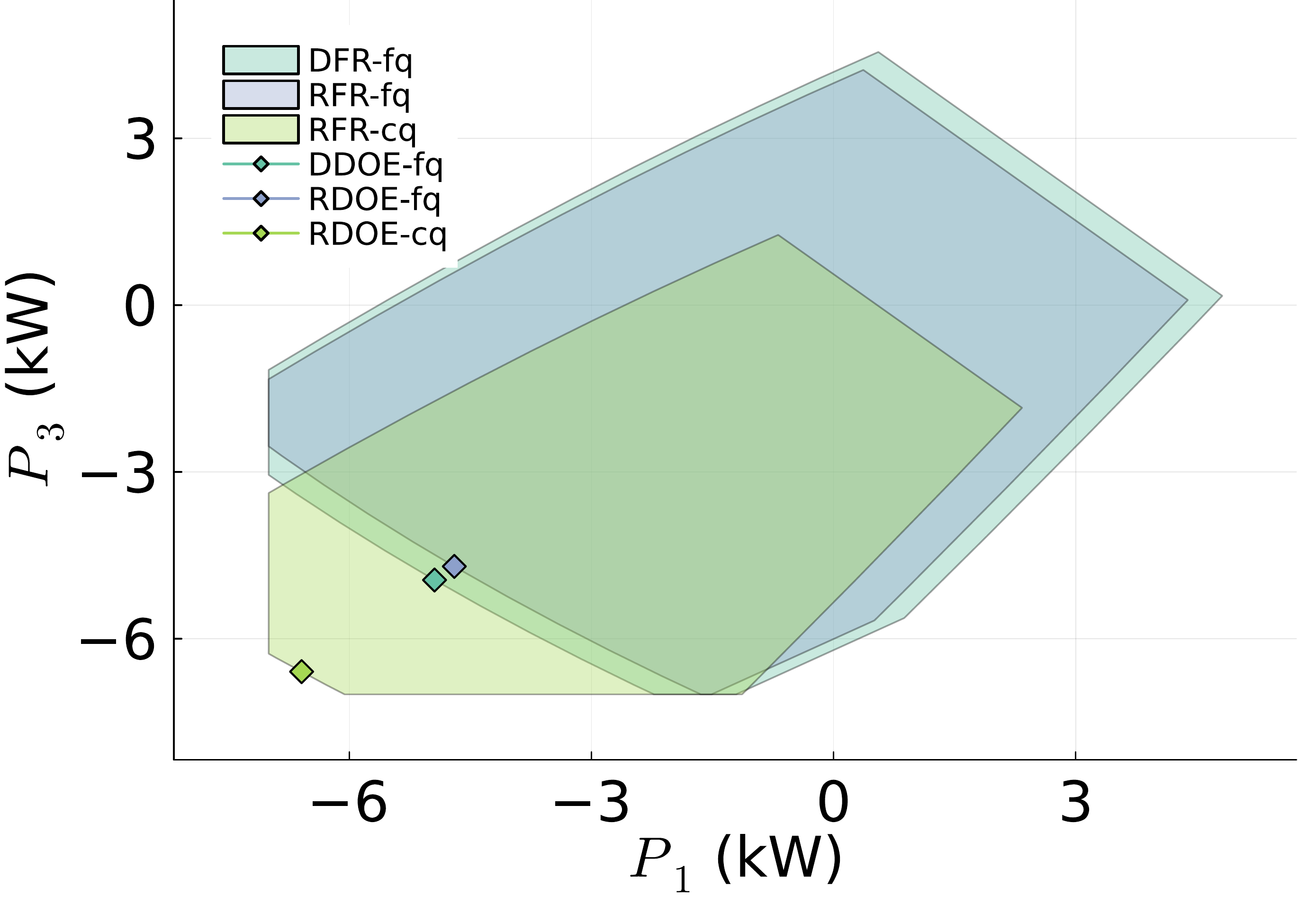}
		\caption{\small Uncertainty only in $p_2$.}
		\label{fig_FRs_p2_2_bus}
	\end{subfigure}
	\hfill
	\begin{subfigure}[b]{0.24\textwidth}
		\centering
		\includegraphics[width=\textwidth]{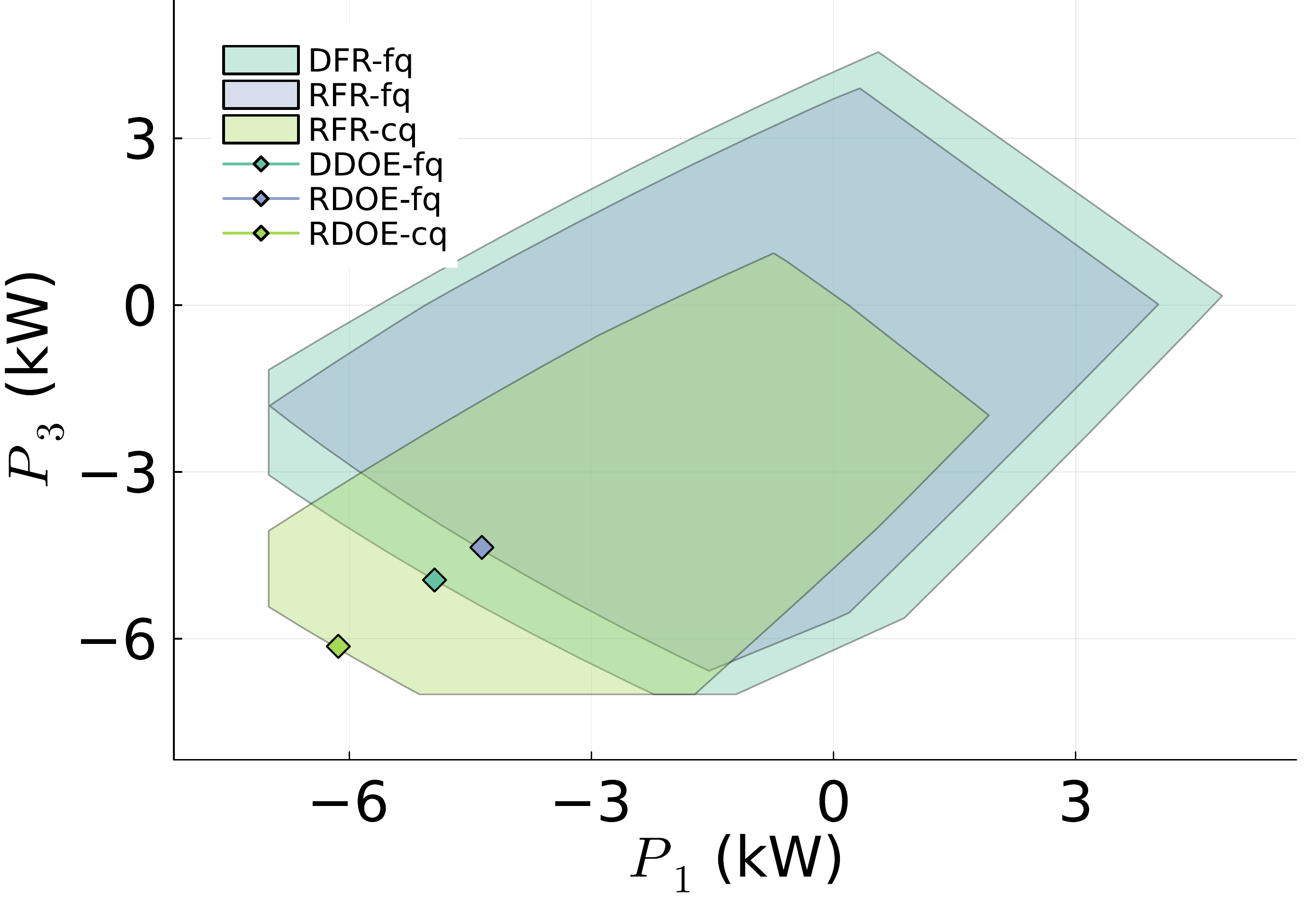}
		\caption{\small Uncertainty in both $E$ and $p_2$.}
		\label{fig_FRs_Ep2_2_bus}
	\end{subfigure}
	\hfill
	\begin{subfigure}[b]{0.24\textwidth}
		\centering
		\includegraphics[width=\textwidth]{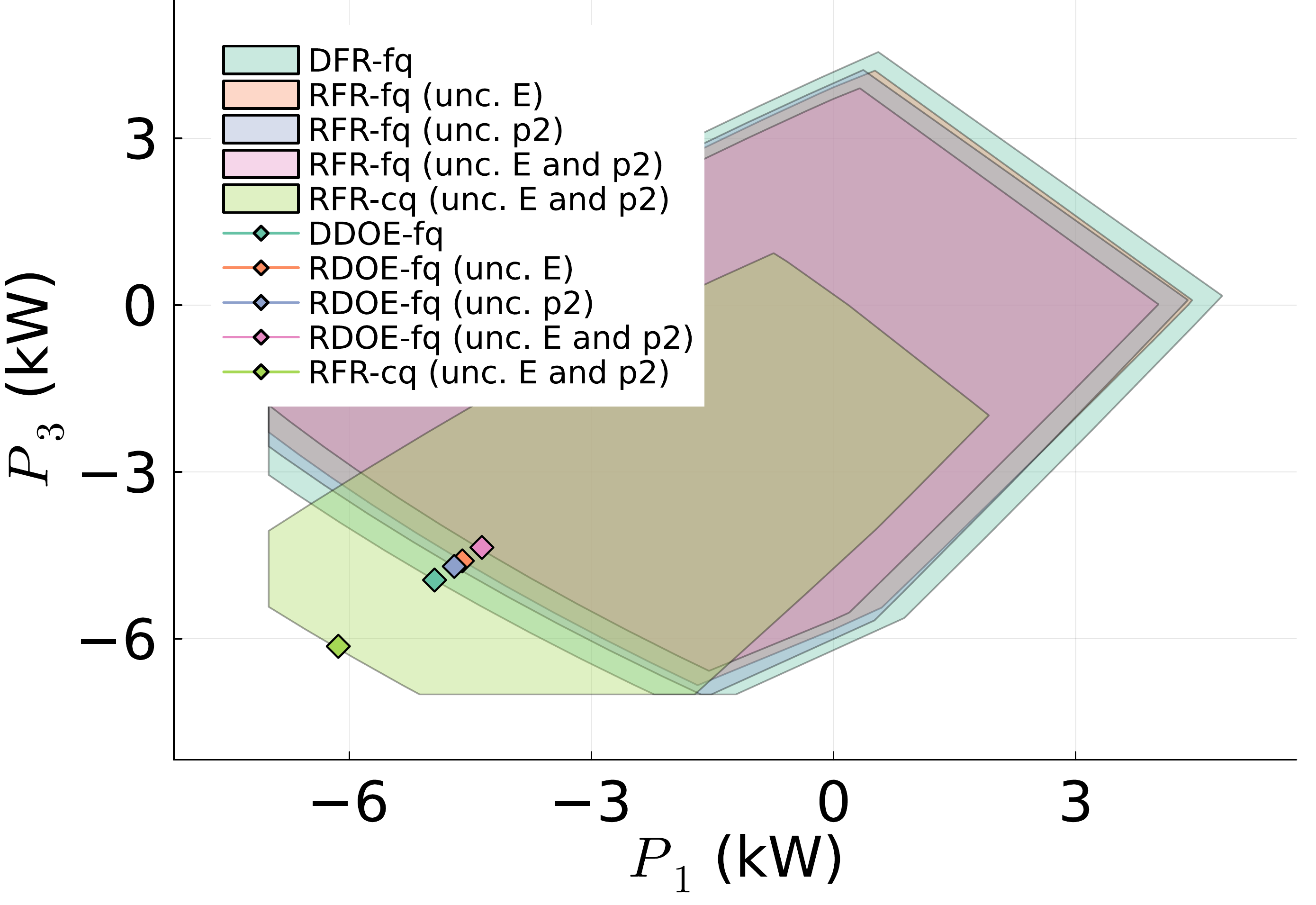}
		\caption{\small Under various uncertainties.}
		\label{fig_FRs_various_2_bus}
	\end{subfigure}
	\hfill
	\caption{\small FRs and DOEs for the illustrative network under various uncertainties (DFR/RFR: FR via deterministic/robust approach; DDOE/RDOE: deterministic/Robust DOE; \emph{fq}: with fixed $q_1$ (values being 0 kvar), \emph{cq}: with optimised $q_1$).}
	\label{fig_2_bus_simu}
\end{figure}

Simulation results clearly show that RDOEs are more conservative than DDOEs, and a higher level of uncertainty leads to a more conservative allocation strategy, as demonstrated in Fig.\ref{fig_FRs_E_2_bus}. Moreover, as shown in Table \ref{tab_case_study} and Fig.\ref{fig_2_bus_simu}, RFRs and allocated DOEs with optimised controllable reactive powers can effectively report ameliorated DOEs. On computational time, RDOEs can be calculated efficiently for both networks except when bilinear uncertainty exists in the Australian network. Moreover, setting up the optimisation model for this case can also be computationally demanding. In fact, for the Australian network, it takes at most 9 seconds to set up the optimisation model when there is a single uncertainty. In comparison, the setup time is as high as 2306.05 seconds when bilinear uncertainty exists due to a large number of constraints in \eqref{rfr_Eb}, implying computational efficiency can be potentially improved by investigating efficient programming techniques.

\section{Conclusions}
Uncertainties in demand forecasting and impedance modelling in distribution networks are inevitable and could potentially undermine the reliability of calculated DOEs for DER integration. This letter studies the calculation of DOEs when single or bilinear uncertainty exists in demands and network impedances, leading to various tractable formulations. Moreover, uncertainty sets are formulated as generalised norm constraints and could cover the most commonly used measures in quantifying uncertainties. 
Simulation results show the differences in DOE allocation strategies geometrically when with and without considering uncertainties, and demonstrate the efficiency of the proposed approach. Noting that the proposed approach is built on a linear UTOPF model, further improving accuracy in linearising UTOPF and investigating robust formulations under other types of uncertainty sets are potential research directions.

\color{black}
\section{Appendix}
\subsection{Discussions on whether matrices $C$ and $D$ in \eqref{oe_utopf_com} are invertible}
We here take a three-bus illustrative distribution network\footnote{The network in Fig.\ref{fig_3_bus_system} is three-phase balanced. However, the conclusion also applies in an unbalanced distribution network, as we demonstrate later.} to demonstrate if and why the matrices $C$ and $D$ are generally invertible.
\begin{figure}[htpb!]
    \centering\includegraphics[scale=1.05]{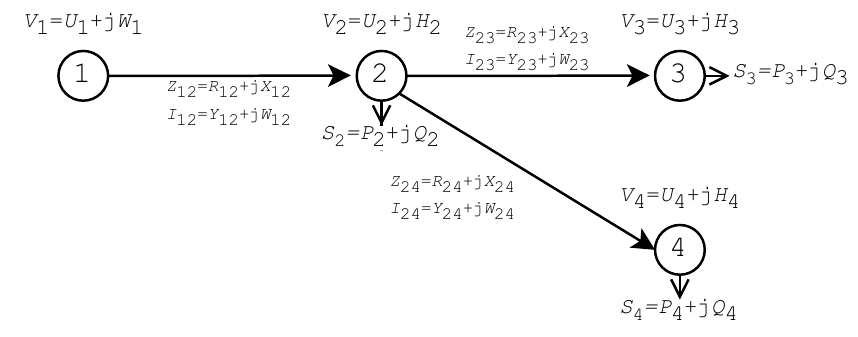}
    \caption{\normalsize Topology of the 3-bus illustrative network.}
    \label{fig_3_bus_system}
\end{figure}

\textbf{To show $C$ is invertible}

In the 3-bus illustrative network, by fixing the voltage $V_i^\phi$ (for a balanced distribution network this is $V_i$) in the denominator of the term on the right-hand side of (1d) in the manuscript, we have
    \begin{eqnarray}
        I_{12}-I_{23}-I_{24}=\frac{S_2^*}{\bar V_2^*},~
        I_{23}=\frac{S_3^*}{\bar V_3^*},~
        I_{24}=\frac{S_4^*}{\bar V_4^*}            
    \end{eqnarray}
where $\bar V$ represents the given value of $V$ for linearisation purposes, either from the estimation or network measurements, as we discussed in the manuscript. 

Then, we have 
\begin{eqnarray}\small\label{mtx_01}
    \begin{bmatrix}
        -1/\bar V_2^* & 0 & 0 \\0 & -1/\bar V_3^* & 0 \\0 & 0 & -1/\bar V_4^*            
    \end{bmatrix}
    \begin{bmatrix}
        S_2^*\\S_3^*\\S_4^*            
    \end{bmatrix}
    +\begin{bmatrix}
        1 & -1 & -1\\0 & 1 & 0 \\ 0 & 0 & 1            
    \end{bmatrix}
    \begin{bmatrix}
        I_{12}\\I_{23}\\I_{24}            
    \end{bmatrix}=0
\end{eqnarray}
which, expressed by real numbers, is equivalent to
{\small\begin{eqnarray}\label{mtx_02}
    \begin{bmatrix}
        \frac{1}{\text{Re}(\bar V_2^*)} & 0 & 0 & \frac{1}{\text{Im}(\bar V_2^*)} & 0 & 0 \\
        0 & \frac{1}{\text{Re}(\bar V_3^*)} & 0 & 0 & \frac{1}{\text{Im}(\bar V_3^*)} & 0  \\
        0 & 0 & \frac{1}{\text{Re}(\bar V_4^*)}  & 0 & 0 & \frac{1}{\text{Im}(\bar V_4^*)} \\ 
        \frac{1}{\text{Im}(\bar V_2^*)} & 0 & 0 & \frac{-1}{\text{Re}(\bar V_2^*)} & 0 & 0 \\
        0 & \frac{1}{\text{Im}(\bar V_3^*)} & 0 & 0 & \frac{-1}{\text{Re}(\bar V_3^*)} & 0  \\
        0 & 0 & \frac{1}{\text{Im}(\bar V_4^*)}  & 0 & 0 & \frac{-1}{\text{Re}(\bar V_4^*)} \\       
    \end{bmatrix}
    \begin{bmatrix}
        P_2\\P_3\\P_4\\Q_2\\Q_3\\Q_4            
    \end{bmatrix}\nonumber\\
    -\begin{bmatrix}
        1 & -1 & -1 & 0 & 0 & 0\\
        0 & 1 & 0 & 0 & 0 \\ 
        0 & 0 & 1 & 0 & 0 \\  
        0 & 0 & 0 & 1 & -1 & -1\\
        0 & 0 & 0 & 0 & 1 & 0 \\ 
        0 & 0 & 0 & 0 & 0 & 1      
    \end{bmatrix}
    \begin{bmatrix}
        Y_{12}\\Y_{23}\\Y_{24}\\W_{12}\\W_{23}\\W_{24}            
    \end{bmatrix}=0
\end{eqnarray}}

Denoting the second matrix in \eqref{mtx_01} as $\bar C$, then the matrix $C_{bal}$ for the balanced distribution network can be constructed as $C_{bal}=\begin{bmatrix}\bar C & 0\\ 0 & \bar C\end{bmatrix}$, as shown by the second matrix in \eqref{mtx_02}, and $C_{bal}$ is invertible if $\bar C$ is invertible. Since $\bar C$ is the connectivity matrix between all nodes (excluding the reference bus) and all lines in the radial distribution network, it is always invertible and, thus $C_{bal}$ is also invertible. Similarly, for an unbalanced three-phase distribution network, $C$, the true matrix used in our formulation, can be constructed as $C=C_{bal}\otimes\begin{bmatrix} 1 & 0 & 0\\ 0 & 1 &0 \\0 & 0 & 1\end{bmatrix}$, where $\otimes$ is the Kronecker product operator. Thus, it is demonstrated that $C$ is also invertible.  

\textbf{To show $D$ is invertible}

For the 3-bus illustrative network, the expression (1c) can be expressed as 
{\small\begin{eqnarray}
    \bar V_1-V_2=Z_{12}I_{12},~
    V_2-V_3=Z_{23}I_{23},~
    V_2-V_4=Z_{24}I_{24}            
\end{eqnarray}}
where $\bar V_1=|\bar V_1|\angle 0^\circ$ is the fixed value for $V_1$ since it is the reference bus. 

Then, we have 
{\small\begin{eqnarray}\label{mtx_03}
    \begin{bmatrix}
        -1 & 0 & 0 \\1 & -1 & 0 \\1 & 0 & -1            
    \end{bmatrix}
    \begin{bmatrix}
        V_2\\V_3\\V_4            
    \end{bmatrix}
    +\begin{bmatrix}
        Z_{12} & 0 & 0 \\0 & Z_{23} & 0 \\ 0 & 0 & Z_{24}            
    \end{bmatrix}
    \begin{bmatrix}
        I_{12}\\I_{23}\\I_{24}            
    \end{bmatrix}
    =\begin{bmatrix}
        -\bar V_1\\0\\0            
    \end{bmatrix}
\end{eqnarray}}
which, expressed by real numbers, is equivalent to
{\small\begin{eqnarray}\label{mtx_04}
    \begin{bmatrix}
        -1 & 0 & 0 & 0 & 0 & 0\\
        1 & -1 & 0 & 0 & 0 & 0 \\
        1 & 0 & -1 & 0 & 0 & 0\\    
        0 & 0 & 0 & -1 & 0 & 0 \\
        0 & 0 & 0 & 1 & -1 & 0 \\
        0 & 0 & 0 & 1 & 0 & -1 \\        
    \end{bmatrix}
    \begin{bmatrix}
        U_2\\U_3\\U_4\\H_2\\H_3\\H_4            
    \end{bmatrix}+
    \nonumber\\\begin{bmatrix}
        R_{12} & 0 & 0 & -X_{12} & 0 & 0 \\
        0 & R_{23} & 0 & 0 & -X_{23} & 0\\ 
        0 & 0 & R_{24} & 0 & 0 & -X_{24}  \\       
        X_{12} & 0 & 0 & R_{12} & 0 & 0 \\
        0 & X_{23} & 0 & 0 & R_{23} & 0\\ 
        0 & 0 & X_{24} & 0 & 0 & R_{24}  \\ 
    \end{bmatrix}
    \begin{bmatrix}
        Y_{12}\\Y_{23}\\Y_{24}\\W_{12}\\W_{23}\\W_{24}            
    \end{bmatrix}
    =\begin{bmatrix}
        -|\bar V_1|\\0\\0\\0\\0\\0        
    \end{bmatrix}
\end{eqnarray}}

Denoting the first matrix in \eqref{mtx_03} as $\bar D$, which is invertible, it is obvious that $D_{bal}$, the first matrix in \eqref{mtx_04}, can be constructed as $D_{bal}=\begin{bmatrix}\bar D & 0 \\0 & \bar D\end{bmatrix}$, which is also invertible. Similarly for a unbalanced distribution network, $D$, the matrix used in our formulation, can be constructed as $D=D_{bal}\otimes\begin{bmatrix} 1 & 0 & 0\\ 0 & 1 &0 \\0 & 0 & 1\end{bmatrix}$, which is also invertible.

In general, both $C$ and $D$ in an unbalanced distribution network relate to the connectivity matrix for all buses (excluding the reference bus) and all lines in the radial network and can be guaranteed to be invertible.
    
\subsection{Discussions on using an exact AC power flow formulation to calculate RDOEs}
We here present and discuss the potential challenges when taking an exact AC power flow formulation to calculate RDOEs (taking the case when uncertainty only presents in $E$ as an example).
    
For this case, the compact form of the FR for calculating DOEs, instead of (3) in the manuscript, can be expressed as
\begin{equation}
      \mathcal{F}_N(q)
    \label{fr-03}
    =\left\{p\left\vert\begin{matrix}
        Ap+Bq+Ct(v,l)=b                \\
        Dv+El=d\\
        g(v)\le f                    \\
    \end{matrix}\right.\right\}
\end{equation}
where 
both $p=[p_1^T, p_2^T]^T$, $q=[q_1^T, q_2^T]^T$, and $t(v,l)$ and $g(v)$ are non-convex functions of $v$ when taking an exact power flow formulation that is non-convex.

As $\mathcal{F}_N$ cannot be expressed as a polyhedron, the robust formulation to seek DOEs that is immune to uncertain realisation of $E$ can be formulated as
\begin{subequations}\label{max_ellip_nvx}
    \begin{eqnarray}
        \label{max_ellip_obj_nvx}
        \max_{q,v,l}{r(p_1)}\\
        \label{max_ellip_cons_nvx}
        s.t.~~\exists (v,l) \Rightarrow \left\{\begin{matrix}
            Ap+Bq+Ct(v,l)=b                \\
            Dv+El=d\\
            g_i(v)\le f_i~~\forall i                    \\
        \end{matrix}\right.,~\forall E \in \mathcal{E}
    \end{eqnarray}
\end{subequations}

To solve the above problem, we need to reformulate the constraint into a deterministic form. Assuming $\mathcal{E}=\{E|vec(E)=e+Jx, ||x||\le\gamma\}$, for each $i$, constraint \eqref{max_ellip_cons_nvx} is equivalent to
\begin{subequations}\label{lag_01}
    \begin{eqnarray}
    \label{lag_01_obj}
        \max_{E,x}\min_{v,l}{g_i(v)}\le f_i\\
        \label{lag_01_cons_01}
        s.t.~~Ap+Bq+Ct(v,l)=b~~(\alpha_i)\\
        \label{lag_01_cons_02}
        Dv+El=d~~(\beta_i)\\
        vec(E)=e+Jx~~(\eta_i)\\
        ||x||\le \gamma~~(\delta_i)
    \end{eqnarray}
\end{subequations}

Nothing that the \emph{min} operator in \eqref{lag_01_obj} is based on the assumption that there might be multiple solutions for UTPF \cite{multi_solution}. To derive a deterministic formulation of \eqref{lag_01}, thus making the optimisation problem \eqref{max_ellip_obj_nvx} solvable, we need to remove both the \emph{max} and \emph{min} operators in \eqref{lag_01_obj}. Generally, removing the \emph{min} operator is challenging and, however, it can be naturally removed if we assume that $(v,l)$ are uniquely determined by \eqref{lag_01_cons_01}-\eqref{lag_01_cons_02}. Next, we will show how to remove the \emph{max} operator in \eqref{lag_01} under such an assumption based on duality theory in order to derive a deterministic formulation.

The Lagrangian function for \eqref{lag_01} (excluding the ``$\le f_i$"), when treading $p$ and $q$ as fixed variables and , is 
\begin{subequations}\small
    \begin{eqnarray}
    L(\alpha_i,\beta_i,\eta_i,\delta_i,E,x,v,l)\nonumber\\
        =g_i(v)+\alpha_i^T(Ap+Bq+Ct(v,l)-b)+\beta_i^T(Dv+El-d)\nonumber\\
                            +\eta_i^T(vec(E)-e-Jx)-\delta_i^T(||x||-\gamma)\\
    =g_i(v)+\alpha_i^T(Ap+Bq-b)+\alpha_i^TCt(v,l)+\beta_i^T(Dv-d)\nonumber\\+vec(E)^T(l\otimes \beta_i)+\eta_i^T(vec(E)-e-Jx)-\delta_i^T(||x||-\gamma)\\
    =g_i(v)+\alpha_i^T(Ap+Bq-b)+\alpha_i^TCt(v,l)+\beta_i^T(Dv-d)\nonumber\\+vec(E)^T(l\otimes \beta_i+\eta_i)-\eta_i^Te+\delta_i^T\gamma-\eta_i^TJx-\delta_i^T||x||
    \end{eqnarray}
\end{subequations}
and the optimisation problem \eqref{lag_01} is equivalent to
\begin{subequations}\label{lag_01_L}
    \begin{eqnarray}
    g_i(v)\le \min_{\alpha_i,\beta_i,\eta_i,\delta_i\ge 0}\max_{E,x}L(\alpha_i,\beta_i,\eta_i,\delta_i,E,x,v,l)\\
    =\left\{\begin{matrix}\min_{\alpha_i,\beta_i,\eta_i,\delta_i}g_i(v)+\alpha_i^T(Ap+Bq-b)+\alpha_i^TCt(v,l)\nonumber\\+\beta_i^T(Dv-d)-\eta_i^Te+\delta_i^T\gamma_i\\
    s.t.~~l\otimes \beta_i+\eta_i=0\\
    ||J^T\eta_i||_*\le\delta_i\end{matrix}\right.
\end{eqnarray}
\end{subequations}

By further removing the \emph{min} operator in \eqref{lag_01_L}, we have the following alternate formulation to solve \begin{subequations}\label{max_ellip_nvx_det}
    \begin{eqnarray}
        \label{max_ellip_obj_nvx_det}
        \max_{p,q,v,l,\alpha_i,\beta_i,\eta_i,\delta_i}{r(p_1)}\\
        \label{max_ellip_cons_nvx_det}
        s.t.~~g_i(v)+\alpha_i^T(Ap+Bq-b)+\alpha_i^TCt(v,l)\nonumber\\+\beta_i^T(Dv-d)-\eta_i^Te+\delta_i^T\gamma_i\le f_i~~\forall i\\
        l\otimes \beta_i+\eta_i=0~~\forall i\\
        ||J^T\eta_i||_*\le\delta_i~~\forall i
    \end{eqnarray}
\end{subequations}

Although the non-convex terms in \eqref{max_ellip_nvx_det} may be dealt with by \texttt{Ipopt} or other nonlinear solvers, the further introduced non-convexity due to $g_i(v), \alpha_i^T(Ap+Bq), \alpha_i^TCt(v,l)$ and $\beta_i^TDv$, and the assumptions/approximations made to derive \eqref{max_ellip_nvx_det} may substantially increase the computational complexity, or even lead to intractability, and also undermine the robustness in DOEs we want to achieve.

\subsection{Comparison of the proposed and the canonical three-stage robust optimisation (TSRO) approach}
We here provide more discussions on comparing the proposed approach and the canonical three-stage robust optimisation for making a robust decision against uncertainties. In general, there should be no difference between the solutions from the proposed approach and the TSRO. However, in terms of the specific formulations and the solution algorithms, the two approaches can be different, and the proposed approach can be potentially more efficient, as we explain next by taking the case where uncertainty only exists in $E$ as an example. 

Based on the formulation \eqref{oe_utopf_com} in the manuscript, the variables to be optimised in the first stage\footnote{For simplicity, we assume $p_1,p_2,q_1$ and $q_2$ are variables to be optimised.} are $p,q$, in the second stage is $E$, and in third stage are $v,l$. Then the TSRO can be formulated as
\begin{subequations}\small\label{oe_utopf_com_ro}
    \begin{eqnarray}
        \label{oe_utopf_com_ro-obj}
        \max_{p,q}{r(p)}\\
        \label{oe_utopf_com_ro-01}
        s.t.~~\forall E\in \mathcal{E}, \exists (v,l)\Rightarrow\left\{\begin{matrix}Ap+Bq+Cl=b\\Dv+El=d\\Fv\le f\end{matrix}\right.
    \end{eqnarray}
\end{subequations}

The above problem can be solved as follows based on the widely used robust optimisation approach (see examples when applied in power system operation in \cite{ro_uc_01,ro_uc_02}). 
\begin{enumerate}[A)]
    \item Initialise $\mathbb{E}=\{E_0\}$, where $E_0$ is the expected value of $E$ and $E_0\in\mathcal{E}$. 
    \item Solve the following optimisation problem to get the optimal DOEs, say $p^*$. 
    \begin{subequations}\small\label{oe_utopf_com_0}
        \begin{eqnarray}
            \label{oe_utopf_com-obj_0}
            \max_{p,q}{r(p)}\\
            \label{oe_utopf_com-01_0}
            s.t.~~Ap+Bq+Cl_i=b\\
            Dv_i+E_i l_i=d~\forall E_i\in\mathbb{E}\\
            Fv_i\le f
        \end{eqnarray}
    \end{subequations}

    \item Solve the following optimisation problem\footnote{Since the voltage at the reference bus is known, and $v$ and $l$ can be uniquely determined when $p,q$ and $E$ are fixed, the optimisation problem is a single \emph{max} problem instead of a \emph{max-min} problem that is reported in some other formulations, including, for example, the ones in \cite{ro_uc_01,ro_uc_02}.} and record the optimal solution of $E$ as $E^*$. 
    \begin{subequations}\small\label{oe_utopf_com_01}
        \begin{eqnarray}
            \label{oe_utopf_com-obj_01}
            \max_{E\in\mathcal{E}}{1^T(t^++t^-)+1^T(s^++s^-)+1^Tu}\\
            \label{oe_utopf_com-01_01}
            Ap+Bq+Cl+t^+-t^-=b\\
            Dv+El+s^+-s^-=d\\
            Fv-u\le f\\
            t^+\ge 0, t^-\ge 0, s^+\ge 0, s^-\ge 0, u\ge 0
        \end{eqnarray}
    \end{subequations}

    If the optimal objective value is positive, then update $\mathbb{E}=\mathbb{E}\cup\{E^*\}$ and go back to Step (B). Otherwise, $p^*$ calculated from Step (C) is the desired RDOEs.
\end{enumerate}

Obviously, the approach based on TSRO needs to alternately solve two linear programming (LP) problems until the optimal objective value of \eqref{oe_utopf_com_01} is 0. By contrast, the proposed approach in this paper first reformulates the constraints in \eqref{oe_utopf_com-01} to remove state variables $v$ and $l$, leading to $FD^{-1}EC^{-1}(Ap+Bq-b)\le f-FD^{-1}d$, where only decision variable $p$ and $q$ present. Then depending on the specific type of uncertainty to be investigated and its formulation, various equivalent reformulations can be derived to simplify the calculation process compared with the TSRO approach. 

\color{black}
\bibliography{REFs_Power_Grid}

\bigskip
\footnotesize{\noindent\textbf{Bin Liu} received his Bachelor, Master and PhD degrees, all in Electrical Engineering, from Wuhan University, Wuhan, China, China Electric Power Research Institute, Beijing, China, and Tsinghua University, Beijing, China, in 2009, 2012 and 2015, respectively. He is a Senior Power System Engineer in the Network Planning Division, Transgrid, Sydney, Australia. Before joining Transgrid, he had held research or engineering positions with the Energy Centre, Commonwealth Scientific and Industrial Research Organisation (CSIRO), Newcastle, Australia, The University of New South Wales, Sydney, Australia, the State Grid, Beijing, China, and The Hong Kong Polytechnic University, Hong Kong. His current research interests include power system modelling, analysis \& planning, optimisation theory applications in the power and energy sector, and the interaction of renewable energy, including distributed energy resources (DERs).}\\

\footnotesize{\noindent\textbf{Julio H. Braslavsky} received his PhD in Electrical Engineering from the University of Newcastle NSW, Australia in 1996, and his Electronics Engineer degree from the National University of Rosario, Argentina in 1989. He is a Principal Research Scientist with the Energy Systems Program of the Australian Commonwealth Scientific and Industrial Research Organisation (CSIRO) and an Adjunct Senior Lecturer with The University of Newcastle, NSW, Australia. He has held research appointments with the University of Newcastle, the Argentinian National Research Council (CONICET), the University of California at Santa Barbara, and the Catholic University of Louvain-la-Neuve in Belgium. His current research interests include modelling and control of flexible electric loads and integration of distributed power-electronics-based energy resources in power systems. He is Senior Editor for IEEE Transactions on Control Systems Technology.}\\

\footnotesize{\noindent\textbf{Nariman Mahdavi} received the Ph.D. (First Class) degree in electrical engineering from the Amirkabir University of Technology, Tehran, Iran, in 2011. He is a Senior Research Scientist with Power Systems and Controls, Energy Centre, CSIRO, Australia. Prior to joining CSIRO in 2017, he was a Postdoctoral Researcher with the University of Newcastle, Australia, in collaboration with CSIRO Energy Centre, and held research appointments with the Potsdam Institute for Climate Impact Research, Germany. His research focuses on mathematical modelling, analysis, estimation and control of dynamical systems to help navigate through the transformation across the energy sector.}

\end{document}